\newtheorem{theorem}{Theorem}
\newtheorem{lemma}{Lemma}
\newtheorem{corollary}{Corollary}
      \newcommand {\gam } {\gamma}          
              \newcommand {\ve}   {\varepsilon}
      \newcommand {\pl}   {\partial}
      \newcommand {\RRR}  {{\mathbb R}}
     \newcommand {\beq}  {\begin{equation}}
      \newcommand {\eeq}  {\end{equation}}
     \newcommand {\beqo}  {\begin{equation*}}
      \newcommand {\eeqo}  {\end{equation*}}
      \newtheorem{zam}{Remark}
      \newtheorem{opr}{Definition}
\newcommand{\R}{{\mathbb{R}}}
\title{Invisibility in billiards is impossible in an infinite number of directions}
\author{Alexander Plakhov\thanks{Department of Mathematics, University of Aveiro, Portugal, plakhov@ua.pt} \and Vera Roshchina\thanks{Collaborative Research Network, University of Ballarat, Australia, vroshchina@ballarat.edu.au}}
\begin{document}
\maketitle

\begin{abstract}
We consider the billiard in the exterior of a piecewise smooth  body in two-dimensional Euclidean space and show that
the maximum number of directions of invisibility in such billiard is at most finite.
\end{abstract}

\begin{quote}
{\small {\bf Mathematics subject classifications:} 37D50, 49Q10
}
\end{quote}

\begin{quote}
{\small {\bf Key words and phrases:}
Invisibility, billiards, geometrical optics.}
\end{quote}

\section{Introduction}

The main purpose of this work is to prove that billiard invisibility is impossible in an infinite number of directions in a class of bounded two-dimensional bodies with a piecewise smooth boundary. We consider such a body (set) and the billiard in the complement of this set. Invisibility in a direction means that almost all billiard particles that initially move in this direction and hit the body, after several reflections from the body's boundary are eventually redirected to the same trajectory, so the initial and final (infinite) intervals of this trajectory lie on the same straight line.

Even though research on invisibility is flourishing, and impressive progress has been made in the design of metamaterials (artificial materials engineered to bend electromagnetic waves around a concealed object), until very recently invisibility in mirror optics was largely overlooked both in engineering and mathematics. We refer the reader to our paper \cite{PlakhovRoshchinaNonlin2011} for a brief overview of recent developments in the field and historical background. The focus of this note is solely on the mathematical aspects of billiard invisibility.

We showed earlier that it is impossible to construct a body invisible in {\it all} directions.
This work serves to lower the known upper bound on the number of directions of invisibility: in the two-dimensional case for piecewise smooth bodies it is now reduced from `less than all' to `at most finite number' of directions.

Earlier there have been constructed 2D and 3D bodies invisible in 1 direction  \cite{0-resist} and from 1 point \cite{PlakhovRoshchinaOnePoint}. It is unknown if these results can be improved. There have been also designed infinitely connected sets invisible in 2 directions (in 2D case) and in 3 directions (in 3D case); notice however that these sets do not have piecewise smooth boundary and therefore do not satisfy the assumptions we impose in this paper.

In a somewhat different development Burdzy and Kulczycki \cite{BurdzyKulczycki2012} showed that it is possible to construct a body that is almost invisible in almost all directions with a given arbitrarily small accuracy. The body they constructed is the finite union of disjoint line segments contained in the unit circle, and its projection on most directions is close to the corresponding projection of the circle. Here `most' and `close' mean: up to a set of arbitrarily small measure.

Our paper is organized as follows. In Section~\ref{sec:main} we go over basic definitions and state our main result, then prove it in Section~\ref{sec:proof}.

\section{Notation and the main result}\label{sec:main}

\begin{opr}\label{o piecewise smooth}\rm
A set $\gam \subset \R^2$ is called a {\it piecewise smooth 1D set}, if it is the union of a finite number of curves,
$$
\gam = \cup_{i=1}^m \gam_i([a_i,b_i]),
$$
satisfying the following conditions: each curve is smooth and has a non self-intersecting interior, and the interiors of different curves are disjoint. That is, each function $\gam_i : [a_i,b_i] \to \R^2$ is of class $C^1$ (this implies that there exist the lateral derivatives of $\gam_i$ at $a_i$ and $b_i$); $|\gam_i'(t)| \ne 0$ for all $t \in [a_i,\, b_i]$;\, $\gam_i(t_1) = \gam_i(t_2)$ for $t_1,\, t_2 \in (a_i,\, b_i)$ implies $t_1 = t_2$; and for $i \ne j$,\, $\gam_i((a_i,\, b_i)) \cap \gam_j((a_j,\, b_j)) = \emptyset$.

The endpoints of the curves $\gam_i(a_i)$,\, $\gam_i(b_i)$,\, $i = 1,\ldots,m$ are called {\it singular points of} $\gam$, and the rest of the points of $\gam$ are called {\it regular}.
\end{opr}

\begin{opr}\label{o body}\rm
A compact set $B \subset \R^2$, whose boundary $\pl B$ is a piecewise smooth 1D set, is called {\it a body}. {\it Regular} and {\it singular} points of the body's boundary are determined in agreement with definition \ref{o piecewise smooth}. An example of a body see in fig. \ref{fig body}.
\end{opr}

\begin{figure}[h]
\centering
\includegraphics[keepaspectratio, height=120pt]{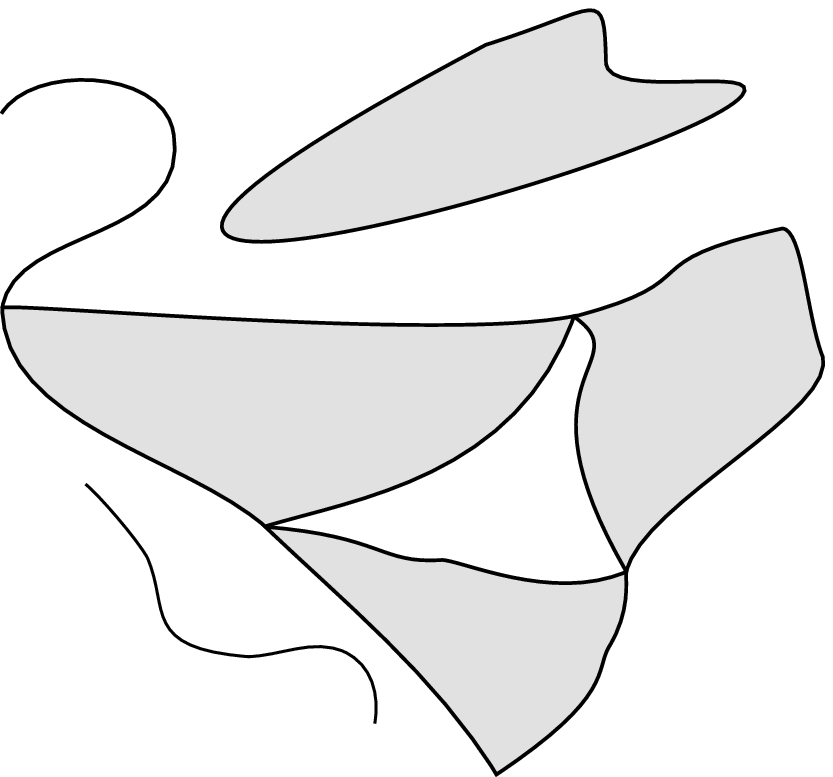}
\caption{A body.}
\label{fig body}
\end{figure}

\begin{zam}\label{r body}\rm
Note that according to definition \ref{o body}, a body is {\it not necessarily connected}.
\end{zam}

We consider the billiard in $\R^2 \setminus B$, where $B$ is a body.

\begin{opr}\label{o regular motion}\rm
A billiard motion $x(t),\, x'(t)$ is called {\it regular}, if it is defined for all $t \in \R$ and has a finite number of reflections at regular points of $\pl B$.
\end{opr}

According to this definition, the function $x(t)$ describing a regular billiard motion is piecewise linear, and its graph has a finite number of linear segments, with the initial and final segments being unbounded:
$$
x(t) = x + vt,\, \ t \le t_i; \quad x(t) = x^+ + v^+ t,\, \ t \ge t_f.
$$
Here $t_i$ and $t_f$ indicate the instants of the first and the final reflection of the billiard particle. Besides, the velocity $x'(t)$ of the particle is a piecewise constant function taking values in $S^1$.

The values $x,\, v$ are called {\it the initial data} and $x^+$,\, $v^+$ {\it the final data} of the motion. The final data are functions of the initial ones, $x^+ = x^+(x,v)$,\, $v^+ = v^+(x,v)$. These functions are continuous and defined on an open subset of $\R^2 \times S^1$. Each regular billiard motion is uniquely defined by its initial data (and also by its final data).

\begin{zam}\label{r initial data}\rm
If $x_1 - x_2$ is parallel to $v$ then the two motions with the initial data $x_1,\, v$ and $x_2,\, v$ can be obtained one from the other by a shift along $t$ (and therefore the corresponding trajectories $\{ x_1(t),\, t \in \R \}$ and $\{ x_2(t),\, t \in \R \}$ coincide). Therefore the billiard motion is well defined even if $x$ is contained in $B$ (see fig. \ref{fig inidata}). In this case the particle initially moves along the `negative' half-line with the direction vector $v$ until a collision with $B$.
\end{zam}

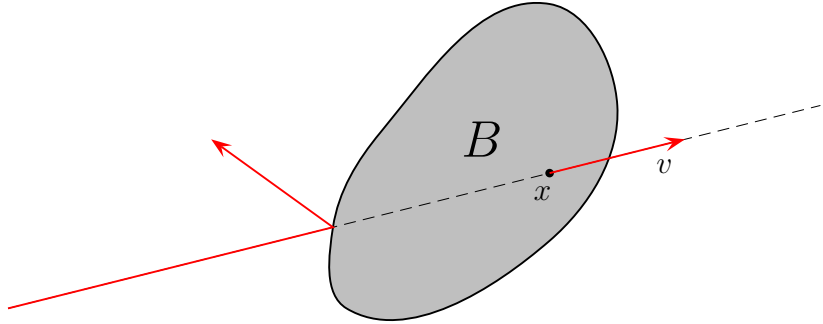
\begin{figure}[h]
\begin{picture}(0,120)
 \rput(7,0.2){
 \scalebox{0.9}{
\psecurve[fillstyle=solid,fillcolor=lightgray](3,1)(0,0)(-0.2,1.2)(0.5,2.6)(3,4.5)(4,3)(3,1)(0,0)(-0.2,1.2)
\rput(2,2.5){\scalebox{1.8}{$B$}}
\psline[linestyle=dashed,linewidth=0.4pt](-5,0)(7,3)
\psdot(3,2)
\rput(2.9,1.7){\scalebox{1.1}{$x$}}
\rput(4.7,2.1){\scalebox{1.1}{$v$}}
\psline[linecolor=red,arrows=->,arrowscale=2](3,2)(5,2.5)
\psline[linecolor=red,arrows=->,arrowscale=2](-5,0)(-0.2,1.2)(-2,2.5)
  }}
  \end{picture}
\caption{The motion with the initial data $x,\, v$.}
\label{fig inidata}
\end{figure}

\begin{opr}\label{o unperturbed}\rm
A regular billiard motion is {\it not perturbed} by the body $B$ (or just {\it unperturbed}), if for its initial $x,\, v$ and final $x^+,\, v^+$ data we have $v^+ = v$ and $x^+ - x$ is parallel to $v$.
\end{opr}

\begin{zam}\label{r unperturbed}\rm
It follows from this definition that the (unbounded) initial and final linear segments of the trajectory corresponding to an unperturbed motion lie on a single straight line.
\end{zam}

Let $v \in S^1$.

\begin{opr}\label{o invisible}\rm
A body $B$ is said to be {\it invisible in the direction $v$}, if for almost all $x \in \R^2$ the billiard motion with the initial data $x, v$ is regular and not perturbed by $B$. The vector $v$ is called {\it a direction of invisibility} for $B$.
\end{opr}

\begin{zam}\label{r invis1}\rm
If $B$ is invisible in a direction $v$, then it is also invisible in the direction $-v$.
\end{zam}

\begin{zam}\label{r invis2}\rm
For all invisible bodies we know, each billiard motion is either unperturbed, or hits the body's boundary at a singular point.
\end{zam}

Introduce some notation. The vector obtained by counterclockwise rotation of $v$ by $\pi/2$ is denoted by $v^\perp$. Thus we have $(v^\perp)^\perp = -v$,\, $\langle u,\, v \rangle = \langle u^\perp,\, v^\perp \rangle$, and $\langle u,\, v^\perp \rangle = -\langle u^\perp,\, v \rangle$. Here and in what follows $\langle \cdot\,, \cdot \rangle$ denotes the scalar product in $\RRR^2$.

Let $\xi \in \pl B$ be a regular point; then $n(\xi)$ denotes the outer unit normal to $B$ at $\xi$. Conv$\,B$ denotes the convex hull of $B$. The set $\pl B \cap \pl(\text{Conv}\,B)$ is called the {\it convex part} of the boundary of $B$.

The main theorem is as follows.

\begin{theorem}\label{thm:main}
Bodies invisible in infinitely many directions do not exist.
\end{theorem}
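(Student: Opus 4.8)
The plan is to show that already a single direction of invisibility forces the convex part $\pl B\cap\pl(\text{Conv}\,B)$ of the boundary to be essentially flat, to deduce that $\text{Conv}\,B$ is a polygon whose vertices are singular points of $\pl B$, and finally to eliminate this polygonal situation by analysing the trajectories that graze an extremal vertex. The basic mechanism is the following observation: if a billiard particle makes its first reflection at a regular point $\xi$ of the convex part, it escapes to infinity without ever hitting $B$ again. Indeed, the tangent line to $\pl(\text{Conv}\,B)$ at $\xi$ supports $\text{Conv}\,B\supset B$ on the side $-n(\xi)$; a particle arriving in direction $v$ with $\langle v,n(\xi)\rangle<0$ has its incoming ray strictly on the $+n(\xi)$ side, so it meets $B$ first exactly at $\xi$, and after reflection its velocity $v-2\langle v,n(\xi)\rangle n(\xi)$ again points strictly to the $+n(\xi)$ side, so it never returns to $B$; since $\langle v,n(\xi)\rangle\neq0$ the outgoing direction is not $v$, and the motion is perturbed. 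Hence, if $v$ is a direction of invisibility, the convex part contains no nondegenerate $C^1$ arc carrying a point with $\langle v,n\rangle<0$: near such a point $n\neq\pm v^\perp$, so $\langle\cdot,v^\perp\rangle$ is a nonconstant $C^1$ function along a subarc and its range is a nondegenerate interval; every line $\{\langle x,v^\perp\rangle=c\}$ with $c$ in that interval hits $B$ first at a regular point of the convex part with $\langle v,n\rangle<0$, so a set of starting points of positive (indeed infinite) Lebesgue measure produces perturbed motions — contradicting invisibility.

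Applying this to $v$ and, by the remark that invisibility in $v$ is equivalent to invisibility in $-v$, also to $-v$, we obtain that along any nondegenerate $C^1$ arc contained in the convex part the normal is $\equiv\pm v^\perp$, i.e. the arc is a segment parallel to $v$; since $\pl B$ is a finite union of $C^1$ arcs, the convex part is a finite union of singular points, isolated regular points and such segments. Invoking two non-parallel directions of invisibility then rules out the segments (they cannot be parallel to both) and, with some additional work, the isolated regular points, leaving the convex part equal to a finite set of singular points of $\pl B$. As this set contains every extreme point of $\text{Conv}\,B$, the convex hull $P:=\text{Conv}\,B$ has finitely many extreme points, hence is a convex polygon; moreover every vertex of $P$ is a singular point of $\pl B$ and $\pl B\cap\pl P$ is finite.

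It remains to contradict the existence of infinitely many directions of invisibility in this polygonal situation, and this is where I expect the real difficulty to lie. For all but finitely many directions of invisibility $v$ — those not parallel to an edge of $P$ — the point $\xi^+(v)\in P$ maximising $\langle\cdot,v^\perp\rangle$ is a single vertex $V$ of $P$, hence a singular point of $\pl B$, so it suffices to bound, for each of the finitely many vertices $V$, the number of such $v$ with $\xi^+(v)=V$. Fix $V$: for $v$ as above, the unperturbed motions entering $P$ near $V$ at heights just below $\max\langle\cdot,v^\perp\rangle$ first reflect at regular points of $\pl B$ tending to $V$ along one of the finitely many arcs issuing from $V$, and must then leave in direction $v$; letting the height approach its extremal value, and fixing the combinatorics along a subsequence, such motions should converge to a billiard path of $\RRR^2\setminus B$ running from $V$ back to $V$ whose initial and final directions are $v$ reflected in fixed endpoint normals at $V$. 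The crux is to show that for each $V$ and each combinatorial type only finitely many directions $v$ admit such a $V$-to-$V$ path: one must exclude one-parameter families of $V$-to-$V$ billiard trajectories with continuously varying direction at $V$ — the ``parabolic-mirror'' degeneracy underlying the known one-direction-invisible bodies — and also rule out that the number of reflections of these grazing motions grows without bound as the height tends to its extremal value.
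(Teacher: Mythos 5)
Your first stage (reducing to the situation where $\text{Conv}\,B$ is a polygon whose vertices are singular points of $\pl B$) reaches the same intermediate conclusion as the paper, and your one-bounce escape mechanism at a regular point of the convex part is essentially the paper's Lemma~2. Two remarks, though: the paper upgrades a single perturbed regular trajectory to an open set of them simply by continuity of the final data $v^+(x,v)$, which disposes at once of every regular point of $\pl B\cap\pl(\text{Conv}\,B)$; your substitute, a positive-measure family swept out by a nondegenerate arc, does not cover isolated regular points, nor the case where a curve $\gam_i$ meets $\pl(\text{Conv}\,B)$ in, say, a Cantor set, and your claim that the convex part is a \emph{finite} union of arcs, isolated points and singular points is not justified. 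These are fixable, but the continuity route is both shorter and complete.

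The genuine gap is in your second stage, and you flag it yourself: to rule out a direction of invisibility $v$ not parallel to any edge, you propose to take limits of grazing trajectories and to exclude one-parameter families of $V$-to-$V$ billiard paths as well as unboundedly many reflections. None of that analysis is carried out, so the proof is incomplete exactly at its hardest point --- and in fact none of it is needed. The paper's Lemma~3 observes that if the $v^\perp$-maximal set of $B$ is a single point $\xi$, then, $\text{Conv}\,B$ being a polygon, $B$ is contained in an angle with vertex $\xi$ lying (apart from $\xi$) strictly in the half-plane $\langle x,v^\perp\rangle<a_{v^\perp}$. A particle moving parallel to $v$ at height $\langle x,v^\perp\rangle$ slightly below $a_{v^\perp}$ therefore hits only the topmost curve $\gam_1$ ending at $\xi$; a short computation gives $\langle v^+,v^\perp\rangle\ge 0$ for the limiting reflected direction, so the outgoing ray stays in the union of the thin strip and the exterior of the angle and never meets $B$ again. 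Each such trajectory has exactly one reflection and $v^+\ne v$, producing an open set of perturbed regular motions and contradicting invisibility. So there are no $V$-to-$V$ returns to exclude and no growth of the reflection number: the ``parabolic-mirror'' degeneracy you worry about cannot arise, because after the first bounce the particle has already left the angle containing $B$. This forces the $v^\perp$-maximal set to contain two points, i.e.\ every direction of invisibility is parallel to a side of the polygon, giving at most one pair $\pm v$ per side and hence finitely many directions in all.
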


Its proof is given in the next section.

\section{Proof of the main theorem}\label{sec:proof}

To prove theorem~\ref{thm:main} we need several technical results.

\begin{lemma}\label{l0 regtraj}
If there exists a regular billiard motion with initial data $x,\, v$ such that $v^+(x,v) \ne v$, then $B$ is not invisible in the direction $v$.
\end{lemma}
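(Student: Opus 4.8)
The plan is to exploit the continuity of the final-data map together with an open-mapping / measure argument. Suppose there is a regular billiard motion with initial data $x_0,\, v$ for which $v^+(x_0,v) = w \ne v$. The functions $x^+(\cdot,v)$ and $v^+(\cdot,v)$ are continuous on an open subset of $\R^2$ (restricting the second argument to the fixed direction $v$), and $x_0$ lies in that open set. Hence there is a whole neighbourhood $\UUU \subset \R^2$ of $x_0$ on which the motion with initial data $x,\, v$ is regular and $v^+(x,v)$ stays close to $w$; in particular, shrinking $\UUU$, we may assume $v^+(x,v) \ne v$ for every $x \in \UUU$, since $w \ne v$ and $v^+$ is continuous.

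Next I would observe that $\UUU$ has positive two-dimensional Lebesgue measure (it contains a ball), and that the directions $x,\, v$ with $x \in \UUU$ give motions that are \emph{not} unperturbed: indeed, by Definition~\ref{o unperturbed} an unperturbed motion must have $v^+ = v$, which fails on all of $\UUU$. Therefore the set of $x \in \R^2$ whose motion with direction $v$ is regular but perturbed has positive measure. This directly contradicts Definition~\ref{o invisible}, which requires that for \emph{almost all} $x$ the motion with initial data $x,\, v$ be regular and unperturbed. Hence $B$ cannot be invisible in the direction $v$.

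One technical point that needs care — and which I expect to be the only real obstacle — is the passage from "a single regular trajectory with $v^+ \ne v$" to "an open set of such initial data." This relies on the statement, already recorded in the excerpt, that $x^+$ and $v^+$ are continuous and defined on an open subset of $\R^2 \times S^1$; I would simply cite that fact. A secondary subtlety is Remark~\ref{r initial data}: points $x$ differing by a multiple of $v$ give the same trajectory, so "almost all $x \in \R^2$" in the definition of invisibility is really a condition transverse to $v$. But this does not affect the argument: a genuinely two-dimensional neighbourhood $\UUU$ of $x_0$ still projects onto a set of positive one-dimensional measure in the direction $v^\perp$, so the perturbed set is non-null however one interprets "almost all." This completes the plan.
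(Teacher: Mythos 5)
Your argument is correct and is essentially the paper's own proof: continuity of $v^+$ gives an open neighbourhood of $x$ on which $v^+(\cdot,v)\ne v$, and such a set has positive measure, contradicting the ``almost all'' requirement in the definition of invisibility. The extra care you take with the measure-theoretic step and with Remark~\ref{r initial data} is a sound elaboration of what the paper leaves implicit, not a different route.
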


\begin{proof}
Since the function $v^+$ is continuous, for $\check{x}$ in a small neighborhood of $x$ we have $v^+(\check{x},v) \ne v$. Therefore $B$ is not invisible in the direction $v$.
\end{proof}

\begin{lemma}\label{l1 sing}
Let $B$ be invisible in two linearly independent directions $v_1$ and $v_2$. Then each point of $\pl B \cap \pl(\text{Conv}\,B)$ is a singular point of $\pl B$.
\end{lemma}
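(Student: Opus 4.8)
The plan is to argue by contradiction: assume some point $\xi\in\pl B\cap\pl(\text{Conv}\,B)$ is regular, and then produce a \emph{single} regular billiard motion, in one of the directions of invisibility, whose final velocity differs from its initial one; Lemma~\ref{l0 regtraj} will then give the contradiction. Before touching the billiard, I would record a purely geometric fact: at a regular point $\xi$ of the convex part of $\pl B$, the tangent line $T$ to $\pl B$ at $\xi$ is a supporting line of $\text{Conv}\,B$, and more precisely $\text{Conv}\,B\subset\{p:\langle p-\xi,\,n(\xi)\rangle\le 0\}$, where $n(\xi)$ is the outer normal. To see this one takes any supporting line $L$ of $\text{Conv}\,B$ at $\xi$ (it exists because $\xi\in\pl(\text{Conv}\,B)$); the $C^1$ curve parametrizing $\pl B$ near $\xi$ lies in the corresponding closed half-plane and touches its boundary at $\xi$, so the real-valued function measuring signed distance to $L$ along that curve has a local maximum at $\xi$, forcing the tangent direction to be parallel to $L$; hence $L=T$, and comparing orientations with the local side of $B$ pins down the sign of the normal.

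Next I would choose the direction. Since $v_1$ and $v_2$ are linearly independent, they cannot both be parallel to $T$, so at least one of them, say $v_i$, satisfies $\langle v_i,\,n(\xi)\rangle\ne 0$; replacing it by $-v_i$ if necessary (still a direction of invisibility by Remark~\ref{r invis1}), I obtain a direction of invisibility $v$ with $\langle v,\,n(\xi)\rangle<0$.

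Then I would analyze the billiard motion that travels in direction $v$ along the straight line through $\xi$. The portion of this line preceding $\xi$ consists of points $\xi+sv$ with $s\le 0$, which satisfy $\langle(\xi+sv)-\xi,\,n(\xi)\rangle=s\langle v,\,n(\xi)\rangle\ge 0$; hence it lies in the closed half-plane $\{\langle p-\xi,\,n(\xi)\rangle\ge 0\}$, which is disjoint from $\text{Conv}\,B$ except along $T$, and it cannot meet $B$ on $T$ either because $v$ is transversal to $T$. So the first reflection of this particle occurs exactly at $\xi$, a regular point, and since $\langle v,\,n(\xi)\rangle\ne 0$ it is a genuine reflection with post-collision velocity $v^+=v-2\langle v,\,n(\xi)\rangle\,n(\xi)\ne v$. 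Finally, every point $p$ of the reflected ray after $\xi$ has $\langle p-\xi,\,n(\xi)\rangle=s\langle v^+,\,n(\xi)\rangle=-s\langle v,\,n(\xi)\rangle>0$, so the outgoing ray stays in the open half-plane containing no point of $\text{Conv}\,B$ and never meets $B$ again. This exhibits a regular billiard motion (exactly one reflection, at a regular point) with initial direction $v$ and final direction $v^+\ne v$, so by Lemma~\ref{l0 regtraj} the body $B$ is not invisible in $v$, contradicting its invisibility in both $v_1$ and $v_2$.

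The step I expect to be the main obstacle is the first, geometric one: making fully rigorous that the $C^1$ tangent to $\pl B$ at a convex-part point really coincides with a supporting line of $\text{Conv}\,B$ carrying the correct orientation of the normal, including any degenerate configurations of $B$ near $\xi$. Once that is in place, the rest is elementary half-plane bookkeeping together with the already established Lemmas.
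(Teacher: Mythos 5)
Your proof is correct and follows essentially the same route as the paper: pick a direction of invisibility transversal to the tangent/supporting line at the hypothetical regular point $\xi$, and exhibit the single-reflection trajectory through $\xi$ as a regular billiard motion with $v^+ \ne v$, contradicting Lemma~\ref{l0 regtraj}. You merely spell out in more detail the supporting-line geometry that the paper compresses into the assertions that $\xi$ is a regular point of $\pl(\text{Conv}\,B)$ and that the reflected trajectory is also a regular motion in $\R^2\setminus B$.
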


\begin{proof}
Suppose that $\xi \in \pl B \cap \pl(\text{Conv}\,B)$ is a regular point of $\pl B$. Then it is also a regular point of $\pl(\text{Conv}\,B)$. At least one of the values $\langle v_1,\, n(\xi) \rangle$,\, $\langle v_2,\, n(\xi) \rangle$ is nonzero. Without loss of generality assume that $\langle v_1,\, n(\xi) \rangle \ne 0$. Replacing if necessary $v_1$ with $-v_1$, we can ensure that $\langle v_1,\, n(\xi) \rangle < 0$.

The billiard motion in  $\R^2 \setminus \pl(\text{Conv}\,B)$ with the initial data $\xi,\, v_1$ is regular and is described by the function
$$
x(t) = \left\{ \begin{array}{cc} \xi + v_1 t, & \text{if } \ t \le 0\\ \xi + v_1^+ t, & \text{if } \ t \ge 0 \end{array} \right.,
$$
where $v_1^+ = v_1 - \langle v_1,\, n(\xi) \rangle\, n(\xi)$ is defined according to the law of elastic reflection (see fig. \ref{fig conv}). Obviously, $v_1^+ \ne v_1$. Note that the same motion is also a regular motion in $\R^2 \setminus B$, and so by lemma \ref{l0 regtraj}, $B$ is not invisible in the direction $v_1$.
\end{proof}

\begin{figure}[h]
  \centering
\includegraphics[keepaspectratio, height=120pt]{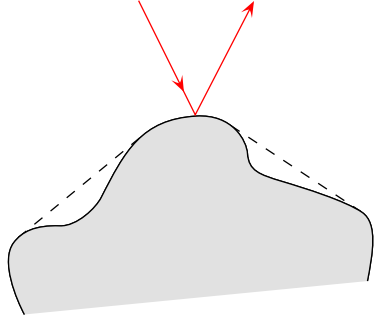}
\caption{Reflection from the convex part of the body's boundary.}
\label{fig conv}
\end{figure}

\begin{corollary}\label{cor1}
Under the assumptions of lemma \ref{l1 sing}, $\text{Conv}\,B$ is a polygon.
\end{corollary}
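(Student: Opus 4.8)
The plan is to combine Lemma~\ref{l1 sing} with the finiteness of the set of singular points and the Krein--Milman (Minkowski) theorem. First I would observe that, by Definition~\ref{o piecewise smooth}, the boundary $\pl B$ is a finite union of $C^1$ curves and hence has only finitely many singular points (at most $2m$ of them, namely the endpoints $\gam_i(a_i),\gam_i(b_i)$). By Lemma~\ref{l1 sing}, under the hypothesis that $B$ is invisible in two linearly independent directions, every point of $\pl B \cap \pl(\text{Conv}\,B)$ is one of these singular points; therefore $\pl B \cap \pl(\text{Conv}\,B)$ is a finite set.

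Next I would show that every extreme point of $\text{Conv}\,B$ lies in $\pl B \cap \pl(\text{Conv}\,B)$. Since $B$ is compact, so is $\text{Conv}\,B$, and a standard fact asserts that the extreme points of the convex hull of a compact set belong to that set itself; thus each extreme point $\xi$ of $\text{Conv}\,B$ lies in $B$. As $B \subseteq \text{Conv}\,B$ and $\xi \in \pl(\text{Conv}\,B)$, the point $\xi$ cannot be interior to $B$, so $\xi \in \pl B$, that is, $\xi \in \pl B \cap \pl(\text{Conv}\,B)$. Consequently $\text{Conv}\,B$ has only finitely many extreme points.

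Finally, by the Krein--Milman theorem (in $\R^2$ this is Minkowski's theorem), a compact convex set equals the convex hull of its extreme points; since there are finitely many of these, $\text{Conv}\,B$ is the convex hull of a finite point set, i.e. a polygon (possibly degenerate, such as a segment or a point, if $B$ is lower-dimensional). This would complete the proof.

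The step I expect to require the most care is the second one — being sure that the extreme points of $\text{Conv}\,B$ are genuinely \emph{singular} points of $\pl B$, which requires invoking the two elementary facts that the extreme points of $\text{Conv}\,B$ lie in $B$ and that a point of $B$ lying on $\pl(\text{Conv}\,B)$ must lie on $\pl B$. None of this is deep; the substantive work has already been carried out in Lemma~\ref{l1 sing}, so the corollary is essentially a packaging of that lemma with a finiteness count.
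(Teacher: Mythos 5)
Your proof is correct and follows essentially the same route as the paper: extreme points of $\text{Conv}\,B$ lie in $\pl B \cap \pl(\text{Conv}\,B)$, hence by Lemma~\ref{l1 sing} they are singular points of $\pl B$, of which there are only finitely many, so $\text{Conv}\,B$ is a polygon. You have simply spelled out the standard facts (extreme points of the hull of a compact set lie in the set, and Minkowski's theorem) that the paper leaves implicit.
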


\begin{proof}
Indeed, all the extreme points of $\text{Conv}\,B$ belong to $\pl B \cap \pl(\text{Conv}\,B)$; therefore they are singular points of $\pl B$. It follows that the set of extreme points is finite, therefore $\text{Conv}\,B$ is a polygon.
\end{proof}

\begin{opr}\label{o supp line}\rm
Let
$$
a_n = \sup_{x\in B}\, \langle x,\, n \rangle.
$$
The line $\langle x,\, n \rangle = a_n$ is called the {\it $n$-supporting line} for the body $B$. The set $B \cap \{ x : \langle x,\, n \rangle = a_n \}$ is called the {\it $n$-maximal set} of $B$, and points of this set are called {\it $n$-maximal points} of $B$.
\end{opr}

\begin{lemma}\label{l2 2points}
Under the assumptions of lemma \ref{l1 sing}, the $v_1^\perp$-maximal set of $B$ contains at least two points.
\end{lemma}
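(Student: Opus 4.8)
The plan is to first reformulate the statement geometrically and then to eliminate the only troublesome configuration using invisibility in the direction $v_1$. Write $a:=\sup_{x\in B}\langle x,v_1^\perp\rangle$, so that the $v_1^\perp$-maximal set of $B$ equals $B\cap\ell$, where $\ell=\{x:\langle x,v_1^\perp\rangle=a\}$ is the $v_1^\perp$-supporting line. Since $\text{Conv}\,B$ is a polygon by corollary \ref{cor1} and $\ell$ supports it, $\ell\cap\text{Conv}\,B$ is either an edge $[\eta_1,\eta_2]$ of the polygon or a single vertex $\xi$. In the first case $\eta_1,\eta_2$ are extreme points of $\text{Conv}\,B$, hence belong to $B$, and they lie on $\ell$, so the $v_1^\perp$-maximal set of $B$ contains these two points and we are done. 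Thus it remains to rule out the case $\ell\cap\text{Conv}\,B=\{\xi\}$ (in which $B\cap\ell\subseteq\text{Conv}\,B\cap\ell=\{\xi\}$, so the $v_1^\perp$-maximal set of $B$ would be the single point $\xi$).

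So suppose $\ell\cap\text{Conv}\,B=\{\xi\}$. Since $\xi\in\pl B\cap\pl(\text{Conv}\,B)$, it is a singular point of $\pl B$ by lemma \ref{l1 sing}; by the same lemma every point of $\pl B\cap\pl(\text{Conv}\,B)$ is singular, and $\pl B$ has only finitely many singular points, so $\pl B\cap\pl(\text{Conv}\,B)$ is finite. Hence there is a disc $U$ around $\xi$ in which $B$ meets $\pl(\text{Conv}\,B)$ only at $\xi$, and in which $\pl B$ is a finite union of $C^1$ arcs, each issuing from $\xi$ (a straight line through $\xi$ cannot lie in the tangent cone of the polygon at the vertex $\xi$, so $\xi$ cannot be interior to any arc). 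Near $\xi$ the convex polygon is a wedge with apex $\xi$; as $\xi$ is the \emph{unique} $v_1^\perp$-maximal point, this wedge — hence every one of those arcs — strictly decreases in the $v_1^\perp$-direction as one moves away from $\xi$. Consequently, for every small $\delta>0$ the line $\ell_\delta:=\{x:\langle x,v_1^\perp\rangle=a-\delta\}$ meets $B$ (it meets one of the arcs), while $B\cap\ell_\delta\subseteq\text{Conv}\,B\cap\ell_\delta$ is a segment shrinking to $\{\xi\}$ as $\delta\to0$.

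Now I would use invisibility. By remark \ref{r initial data} a billiard motion in the direction $v_1$ depends only on the line carrying it, so invisibility in the direction $v_1$ yields: for almost every small $\delta$ the motion along $\ell_\delta$ with velocity $v_1$ is regular and unperturbed; fix such a $\delta$. It collides with $B$ (as $B\cap\ell_\delta\ne\emptyset$), and its first and last collision points $z$ and $w$ lie on $\ell_\delta$, hence in the shrinking segment, so $z,w\to\xi$ as $\delta\to0$; being regular, $z$ and $w$ lie on the arcs of $\pl B$ issuing from $\xi$. Let $u$ be the unit tangent at $\xi$ of the arc carrying $z$; then, as $\delta\to0$, the velocity just after the reflection at $z$ tends to $\bar v:=2\langle v_1,u\rangle u-v_1$. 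Split into two cases according to the side of $\xi$ on which $z$ lies along $\ell_\delta$. If $z$ does not lie strictly on the $v_1$-side (equivalently $\langle v_1,u\rangle\le 0$), then $\langle\bar v,v_1^\perp\rangle=2\langle v_1,u\rangle\langle u,v_1^\perp\rangle\ge0$, and one checks — using the curvature of the arc in the degenerate sub-case $\langle u,v_1^\perp\rangle=0$ — that for small $\delta$ the post-reflection velocity at $z$ either points strictly into the open half-plane $\{\langle\cdot,v_1^\perp\rangle>a-\delta\}$ or is almost antiparallel to $v_1$; in either event the particle is near the topmost vertex $\xi$ of the convex set $\text{Conv}\,B$ and moving away from it, so it leaves $\text{Conv}\,B$ with final velocity bounded away from $v_1$, contradicting unperturbedness via lemma \ref{l0 regtraj}. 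In the remaining case $z$ lies strictly on the $v_1$-side of $\xi$; but then $w$, being the point of $B\cap\ell_\delta$ with maximal $\langle\cdot,v_1\rangle$, also lies on that side, and one runs the same argument on the time-reversed motion — which is a regular unperturbed motion in the direction $-v_1$ (remark \ref{r invis1}) whose first collision is $w$.

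The step I expect to be the real obstacle is the ``escape'' claim buried in the case analysis: that once the particle has reflected at $z$ (or $w$) near the apex $\xi$ with a velocity pointing away from $\text{Conv}\,B$, it actually leaves $\text{Conv}\,B$ for good and never returns to $\ell_\delta$ travelling in the direction $v_1$. A priori it could undergo further reflections off the finitely many arcs of $\pl B$ inside the shrinking region $\text{Conv}\,B\cap\{a-\delta\le\langle\cdot,v_1^\perp\rangle\le a\}$ before escaping; regularity forbids infinitely many of them, but one must still exclude that finitely many of them conspire to send the particle back onto $\ell_\delta$ in the direction $v_1$. Rescaling by $1/\delta$ about $\xi$ turns this into the assertion that finitely many half-lines emanating from a single point cannot act as an ``invisibility gadget'' for particles arriving along the tangent direction $v_1$ — a soft but genuinely nontrivial fact that I would settle by an angle-and-convexity analysis of billiards in a finite fan of rays through one point. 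Carrying this out uniformly over the finitely many tangent configurations of $\pl B$ at $\xi$ is the crux of the proof.
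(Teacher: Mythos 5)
Your overall strategy coincides with the paper's: assume the $v_1^\perp$-maximal set is the single vertex $\xi$ of the polygon $\text{Conv}\,B$, look at trajectories in the direction $v_1$ in a thin strip below the supporting line, and extract a contradiction from the reflection at the first (or, after time reversal, the last) collision point near $\xi$. But the argument is not complete: the step you yourself flag as ``the crux'' --- that after the reflection at $z$ the particle actually escapes, rather than undergoing further reflections off the other arcs meeting at $\xi$ which could conceivably restore the direction $v_1$ --- is precisely the content of the lemma, and you leave it as an unexecuted plan (``an angle-and-convexity analysis of billiards in a finite fan of rays''). As written, you have only shown that \emph{some} reflection produces an outgoing velocity different from $v_1$, which by itself does not contradict invisibility.

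The paper closes this gap without any fan analysis, using two observations you did not quite exploit. First, since $\text{Conv}\,B$ is a polygon with vertex $\xi$, the body lies in an angle with apex $\xi$ whose closure minus the apex is contained in the open half-plane $\langle x,\, v_1^\perp\rangle < a$; hence every arc ending at $\xi$ has tangent with $\langle \gamma_i'(b_i),\, v_1^\perp\rangle$ \emph{strictly} positive. In particular your degenerate sub-case $\langle u,\, v_1^\perp\rangle = 0$, for which you invoke a curvature that a merely $C^1$ arc need not possess, cannot occur, and in the strip the arcs form an ordered finite stack of graphs over the $v_1^\perp$-axis. Second, the incoming particle necessarily makes its first reflection on the extreme arc $\gamma_1$ of that stack (the one met first when travelling in the direction $v_1$); choosing between the trajectory and its time reversal so that $\langle \gamma_1'(b_1),\, v_1\rangle \ge 0$, the reflected velocity $w$ satisfies $\langle w,\, v_1^\perp\rangle > -\ve$, and a short quantitative estimate shows that a ray with such a direction issued from a point of $\gamma_1$ sufficiently close to $\xi$ remains in the union of the thin strip and the exterior of the angle containing $B$, hence never meets $B$ again. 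So there is exactly one reflection, and $v^+ \ne v_1$ because $\langle \gamma_1'(b_1),\, v_1^\perp\rangle \ne 0$, contradicting lemma \ref{l0 regtraj}. That single escape estimate is what is missing from your write-up; the rest of your reduction (the vertex-versus-edge dichotomy, the use of remark \ref{r initial data}, the time-reversal trick for the other case) is sound, though the edge case needs no polygon argument: a nonempty set that is not a singleton already contains two points.
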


\begin{proof}
For the sake of simplicity write $v$ in place of $v_1$, omitting the subscript. Since $B$ is compact, the $v_1^\perp$-maximal set is not empty. It remains to prove that it is not a singleton. Assume the contrary, that is,
$$
B \cap \{ x : \langle x,\, v^\perp \rangle = a_{v^\perp} \} = \{ \xi \}.
$$
Consider the curves $\gam_i$ comprising $\pl B$ that contain $\xi$ (see fig. \ref{fig single}). Clearly, $\xi$ is an endpoint of each such curve. Assume without loss of generality that $\xi$ is the second endpoint of each curve, $\xi = \gam_i(b_i)$, and therefore
$$
\langle \gam'_i(b_i),\, v^\perp \rangle \ge 0.
$$
(Otherwise we just reparameterize the curve by $\tilde\gam_i(t) = \gam_i(-t)$,\, $t \in [-b_i,\, -a_i]$.)

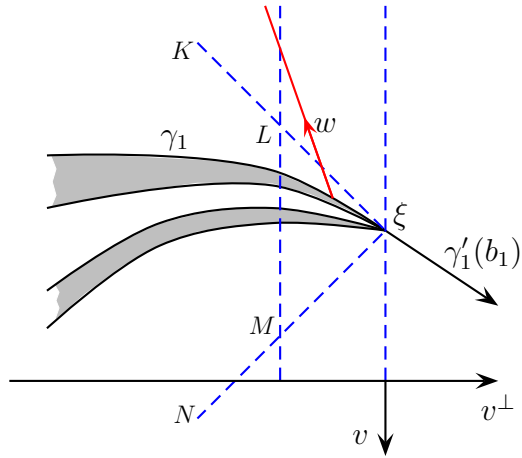
\begin{figure}[h]
\begin{picture}(0,190)
 \rput(4.8,1.5){
 \scalebox{1}{
 \pspolygon[linewidth=0,linecolor=white,fillstyle=solid,fillcolor=lightgray]
 (0.55,3)(1.5,3)(2.5,2.95)(3,2.91)(3.5,2.8)(4,2.59)(4.5,2.32)
 (5,2)(4.5,2.27)(4,2.48)(3.5,2.62)(2.5,2.63)(1.5,2.49)(0.55,2.3)
 (0.62,2.47)(0.57,2.65)(0.62,2.82)
  \pspolygon[linewidth=0,linecolor=white,fillstyle=solid,fillcolor=lightgray]
  (0.6,1.28)(1,1.57)(1.5,1.87)(2,2.1)(2.5,2.23)(3,2.3)(3.5,2.3)(4,2.24)(4.5,2.13)
  (5,2)(4.5,2.05)(4,2.1)(3.5,2.1)(3,2.05)(2.5,1.98)(2,1.8)(1.5,1.52)(1,1.15)(0.6,0.8)
  (0.66,0.97)(0.62,1.07)(0.66,1.18)
\pscurve(0.5,3)(3.5,2.8)(5,2)
\pscurve(0.5,2.3)(3.5,2.6)(5,2)
\pscurve(0.5,1.2)(2,2.1)(3.5,2.3)(5,2)
\pscurve(0.5,0.7)(2,1.8)(3.5,2.1)(5,2)
\psline[arrows=->,arrowscale=2](5,2)(6.5,1)
\psline[linecolor=blue,linestyle=dashed](5,0)(5,4.99)
\psline[linecolor=blue,linestyle=dashed](2.5,4.5)(5,2)(2.5,-0.5)
\psline[linecolor=blue,linestyle=dashed](3.6,0)(3.6,4.99)
\psline[linecolor=red](4.3,2.43)(3.4,4.99)
\rput(2.3,4.4){\scalebox{0.8}{$K$}}
\rput(3.38,3.28){\scalebox{0.8}{$L$}}
\rput(3.36,0.74){\scalebox{0.8}{$M$}}
\rput(2.33,-0.44){\scalebox{0.8}{$N$}}
\psline[arrows=->,arrowscale=2](0,0)(6.5,0)
\psline[arrows=->,arrowscale=2](5,0)(5,-1)
\rput(2.2,3.2){$\gam_1$}
\rput(5.2,2.2){$\xi$}
\rput(6.5,-0.35){$v^\perp$}
\rput(4.67,-0.75){$v$}
\rput(6.3,1.7){$\gam_1'(b_1)$} 
\psline[arrows=->,arrowscale=1.7,linecolor=red](4.3,2.43)(3.92,3.51)
\rput(4.2,3.4){$w$}
  }}
  \end{picture}
\caption{The $v_1^\perp$-maximal set is a singleton.}
\label{fig single}
\end{figure}

Recall that by corollary \ref{cor1}, Conv$\,B$ is a polygon, therefore $B$ is contained in an angle with the vertex at $\xi$, which is (except for the vertex) contained in the half-plane $\langle x,\, v^\perp \rangle < 0$. This fact allows one to sharpen the previous inequality,
$$
\langle \gam'_i(b_i),\, v^\perp \rangle > 0.
$$
This also implies that in the coordinate system with the $x$-axis parallel to $v^\perp$ and the $y$-axis parallel to $-v$, the curves $\gam_i$ containing $\xi$ can be locally (in a small strip $a_{v^\perp}-\ve < \langle x,\, v^\perp \rangle < a_{v^\perp}$) represented by graphs of functions. Moreover, these functions form a finite monotone sequence. Let the graph of the largest function represent the curve $\gam_1$.

One can choose $\ve > 0$ in such a way that the aforementioned angle (and therefore $B$) is contained in the angle
$$
A_\ve = \Big\{ x : \ \Big\langle \frac{x-\xi}{|x-\xi|}, \ v^\perp \Big\rangle \le 2\ve \Big\}.
$$
(the angle $\measuredangle K\xi N$ in fig. \ref{fig single}).

We have
$$
\text{either \, (i) } \max_i\, \langle \, \gam'_i(b_i),\, v \rangle \ge 0, \quad \text{ or \, (ii) } \min_i\, \langle \gam'_i(b_i),\, v \rangle \le 0
$$
(with the maximum and minimum taken over all $i$ such that $\gam_i(b_i) = \xi$). Assume that (i) holds (the case (ii) is considered in a similar way). By our convention, the maximum is achieved for $\gam_1$, therefore
$$
\langle \gam'_1(b_1),\, v \rangle = \max_i\, \langle \gam'_i(b_i),\, v \rangle \ge 0.
$$
Then for all $x$ with $a_{v^\perp} - \langle x,\, v^\perp \rangle$ positive and sufficiently small, the motion with the initial data $x,\, v$ has (within the strip) a single reflection at a point $x_1 = x_1(x)$ of the curve $\gam_1$. Besides, the velocity $v^+(x)$ after the reflection is close to
$$
v^+ = v - 2\langle v,\, \gam_1'(b_1)^\perp \rangle\, \gam_1'(b_1)^\perp,
$$
which means that $|v^+(x) - v^+| < \ve$.

One easily sees that $\langle v^+,\, v^\perp \rangle \ge 0$; indeed,
$$
\langle v^+,\, v^\perp \rangle = - 2\langle v,\, \gam_1'(b_1)^\perp \rangle\, \langle \gam_1'(b_1)^\perp,\, v^\perp \rangle = 2\langle v^\perp,\, \gam_1'(b_1) \rangle\, \langle \gam_1'(b_1),\, v \rangle \ge 0.
$$
It follows that $\langle v^+(x),\, v^\perp \rangle > -\ve$.

If $a_{v^\perp} - \langle x,\, v^\perp \rangle$ is positive and sufficiently small then each ray $x_1 + wt,\ t \ge 0$ with the vertex at $x_1 = x_1(x)$ and $|w| = 1$,\, $\langle w, v^\perp \rangle > -\ve$ is entirely contained in the set
$$
V_\ve = \Big\{ x : \ \langle x,\, v^\perp \rangle > a_{v^\perp}-\ve, \ \ \Big\langle \frac{x-\xi}{|x-\xi|}, \ v^\perp \Big\rangle > 2\ve \Big\}.
$$
(the set bounded by and to the right of the broken line $KLMN$ in fig. \ref{fig single}). This implies that each particle with the corresponding initial data $x,\, v$ makes a single reflection from $\gam_1$ and then moves freely forever. Indeed, the further motion is in $V_\ve$, which is the union of the strip $a_{v^\perp}-\ve < \langle x,\, v^\perp \rangle < a_{v^\perp}$ and the set $\R^2 \setminus A_\ve$ (exterior of the angle). There are no further reflections inside the strip, and no point of reflection can belong to $\R^2 \setminus A_\ve$ (since $B \subset A_\ve$).

It remains to note that $\langle v,\, \gam_1'(b_1)^\perp \rangle < 0$ and therefore $v^+(x) \ne v$. We come to a contradiction with lemma \ref{l0 regtraj}, which states that $B$ is not invisible in the direction $v$.
\end{proof}

The following corollary of lemma \ref{l2 2points} is obvious.

\begin{corollary}\label{cor2}
Under the assumptions of lemma \ref{l1 sing}, $v_1$ is parallel to a side of the polygon Conv$\,B$.
\end{corollary}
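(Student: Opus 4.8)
The plan is to read the conclusion off directly from Lemma~\ref{l2 2points} together with Corollary~\ref{cor1}. By Definition~\ref{o supp line} the $v_1^\perp$-supporting line $\ell := \{ x : \langle x,\, v_1^\perp \rangle = a_{v_1^\perp} \}$ of $B$ is a straight line whose direction vector is $v_1$ (because $v_1 \perp v_1^\perp$), and $B$ is contained in the closed half-plane $\langle x,\, v_1^\perp \rangle \le a_{v_1^\perp}$.

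First I would note that $\ell$ is also a supporting line of $\text{Conv}\,B$: since a linear functional attains the same supremum over $B$ and over $\text{Conv}\,B$, we have $a_{v_1^\perp} = \sup_{x \in \text{Conv}\,B} \langle x,\, v_1^\perp \rangle$, and $\text{Conv}\,B$ lies in the same closed half-plane. Hence $F := \text{Conv}\,B \cap \ell$ is a nonempty (by compactness) proper face of the convex polygon $\text{Conv}\,B$; such a face is either a single vertex or a side of the polygon.

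Next, Lemma~\ref{l2 2points} tells us that the $v_1^\perp$-maximal set $B \cap \ell$ of $B$ contains two distinct points. Since $B \cap \ell \subseteq \text{Conv}\,B \cap \ell = F$, the face $F$ contains two distinct points and therefore cannot be a vertex; it must be a side of $\text{Conv}\,B$. This side lies on $\ell$, whose direction is $v_1$, so $v_1$ is parallel to a side of $\text{Conv}\,B$, as claimed.

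Every step here is a standard fact about supporting lines and faces of convex polygons, so there is no genuine obstacle; the only points deserving a word of care are the identification of $\ell$ as a supporting line of $\text{Conv}\,B$ (immediate, since passing to the convex hull does not change the supremum of a linear functional) and the remark that a proper face of a polygon containing more than one point is necessarily one of its sides.
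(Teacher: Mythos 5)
Your proof is correct and is exactly the argument the paper has in mind: the paper simply declares this corollary an obvious consequence of Lemma~\ref{l2 2points}, and your writeup spells out that the two $v_1^\perp$-maximal points force the supporting line (with direction $v_1$) to meet $\text{Conv}\,B$ in a side rather than a vertex. Nothing further is needed.
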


Let us now prove the theorem. Let a body $B$ be invisible in several (at least two) directions. By corollary \ref{cor1}, Conv$\,B$ is an $m$-gon, $m \ge 3$. By corollary \ref{cor2}, each direction of invisibility is parallel to a side of this $m$-gon. Therefore there are at most $m$ directions of invisibility. The theorem is proved.

\section*{Acknowledgements}

The work was supported by the FCT research project PTDC/MAT/113470/2009 and by the Collaborative Research Network, University of Ballarat, Australia. In addition, the first author has been partly supported by {\it FEDER} funds through {\it COMPETE}--Operational Programme Factors of Competitiveness and by Portuguese funds through the {\it Center for Research and Development in Mathematics and Applications} (CIDMA) and the Portuguese Foundation for Science and Technology (FCT), within project PEst-C/MAT/UI4106/2011 with COMPETE number FCOMP-01-0124-FEDER-022690.

\end{document}